\newcommand{\slfrac}[2]{\left.#1\middle/#2\right.}
 \newtheorem{definition}{Definition}
 \newtheorem{theorem}{Theorem}
\newtheorem{corollary}{Corollary}
\begin{document}
\author{Bedratyuk L., Luno N.}
\title{Some properties of generalized  hypergeometric Appell polynomials}

%\shorttitle{Generalized  hypergeometric Appell polynomials}
% for headlines
% (only if main title too long)

%\maketitle 

\begin{abstract}
{In this paper, we present a new  real-valued Appell-type polynomial family~$A_n^{(k)}(m,x), $ $n,  m \in {\mathbb{N}}_0,$ 
 $k \in {\mathbb{N}},$ every member of which  is expressed by mean of the generalized hypergeometric function 
$
{}_{p} F_q
\begin{bmatrix}
\begin{matrix}
a_1, a_2, \ldots, a_p \:\\
b_1, b_2, \ldots, b_q
\end{matrix} \:
 \vrule \:z
\end{bmatrix}=
\sum_{k=0}^{\infty} \frac{a_1^{(k)} a_2^{(k)} \ldots a_p^{(k)}}{b_1^{(k)} b_2^{(k)} \ldots b_q^{(k)}} \frac{z^k}{k!},
$
 where $x^{(n)}$ denotes the Pochhammer symbol (rising factorial) defined by
$x^{(n)}=x(x+1)(x+2)\cdots (x+n-1)$ for $n\geq 1$ and $x^{(0)}=1,$
  as follows
$$
A_n^{(k)}(m,x)=
x^n{}_{k+p} F_q
\begin{bmatrix}
\begin{matrix}
{a_1}, {a_2}, {\ldots}, {a_p}, {\displaystyle -\frac{n}{k}}, {\displaystyle -\frac{n-1}{k}}, {\ldots}, {\displaystyle-\frac{n-k+1}{k}}\:\\
{b_1}, {b_2}, {\ldots}, {b_q}
\end{matrix} \:
 \vrule \: \displaystyle \frac{m}{x^k}
\end{bmatrix}
$$
and is the Appell-type polynomial family simultaneously.

The generating exponential function of this type of polynomials is firstly discovered and the proof that they are of Appell-type ones is given. We present the differential operator formal power series representation  as well as an explicit formula over the standard basis, and establish a new identity for the generalized hypergeometric function. Besides, we derive  the addition, the multiplication and some other formulas for this polynomial family.
}

\end{abstract}
\maketitle

\section{Introduction}  

In \cite{Appell}, P. Appell presented polynomial sequence  $\{A_n(x) \},  n=0,1,2,\ldots,$ such that $deg A_n(x)=n$ and satisfying the identity
\begin{equation*}\label{app22}
A_n^{\prime}(x)=nA_{n-1}(x),
\end{equation*}
where $A_0(x)\neq 0,$ which is called  the Appell polynomials sequence.

An arbitrary Appell  polynomial sequence possesses an exponential generating function

\begin{equation*}\label{app1}
%G(x,t):=
A(t)e^{xt}=
\sum_{n=0}^{\infty}
A_n(x)\frac{t^n}{n!},
\end{equation*}
here $A(t)$ is a formal power series 
\begin{equation}\label{tree}
A(t)=
a_0+a_1t+a_2\frac{t^2}{2!}+\cdots+a_n\frac{t^n}{n!}+\cdots \quad, a_0\neq 0. 
\end{equation}
The Appell-type polynomials $A_n(x)$ are expressed  in the terms of $\{ a_n \}$ as follows
$$
A_n(x)=\sum_{i=0}^n \binom{n}{i} a_{n-i} x^i.
$$

The simplest example of Appell-type polynomials is the monomial sequence $\{x^n\}, n=0, 1, \cdots,$ other examples are the Bernoulli, the Euler polynomials and the Hermite polynomials. For more examples one can consult \cite{Abra, Haze}.

The Appell-type polynomials perform a large variety of features and are widely spread at the different areas of mathematics, namely, at special functions, general algebra, combinatorics and number theory. 
Recently, the Appell-type polynomials are of big interest. The modern researches  give the alternative definitions of Appell-type polynomials and apply new approaches based, for instance, on the determinant method or in Pascal matrix method (see, e.g., \cite{Aldo},
 \cite{Micek}). Consequently, many new properties of those  polynomials are described and a great deal of identities involving Appell-type polynomials are obtained (see \cite{Aceto, Costa, Cheih}).

Let us recall that the generalized hypergeometric function is defined as follows

\begin{equation}\label{defdef}
{}_{p} F_q
\begin{bmatrix}
\begin{matrix}
a_1, a_2, \ldots, a_p \:\\
b_1, b_2, \ldots, b_q
\end{matrix} \:
 \vrule \: z
\end{bmatrix}=
\sum_{k=0}^{\infty} \frac{a_1^{(k)} a_2^{(k)} \ldots a_p^{(k)}}{b_1^{(k)} b_2^{(k)} \ldots b_q^{(k)}} \frac{z^k}{k!},
\end{equation}
 where 
 $a_1, a_2, \ldots, a_p$, $b_1, b_2, \ldots, b_p$  are  complex parameters and none of  $b_i$ equals to a non-positive integer or zero, 
 $x^{(n)}$ denotes the Pochhammer symbol (or rising factorial) defined by
$x^{(n)}=x(x+1)(x+2)\cdots (x+n-1)$ for $n\geq 1$ and $x^{(0)}=1.$
Further on, we denote the generalized hypergeometric function by $ {}_p F_{q}$ for brevity.

 We note that the Gauss hypergeometric function ${}_2 F_{1}$ and the Kummer hypergeometric function  ${}_1 F_{1}$ are the partial cases of (\ref{defdef}).
 
 Apart from the Appell-type polynomials, there exist some polynomial families 
which admit representation via the partial cases of the generalized hypergeometric function, i.e.,
the Jacobi polynomials (\cite{Abra})

$$
P_{n}^{\left( \alpha, \beta \right)}(z)=
\frac{{\left(  \alpha +1 \right)}^{(n)}}{n!}
{}_{2} F_1
\begin{bmatrix}
\begin{matrix}
-n, n{+}\alpha {+}\beta {+}1  \:\\
\alpha {+}1
\end{matrix} \:
 \vrule \: \displaystyle \frac{1-z}{2}
\end{bmatrix}
$$

At the same time, there exists a number of the Appell-type polynomial families which also admit the representation via  partial cases of the Gauss hypergeometric function. It is known (\cite{Abra}) that 
the Laguerre polynomials $L_n(x)$ are presented as follows
$$L_n(x)=
{}_{1} F_1
\begin{bmatrix}
\begin{matrix}
-n \:\\
1
\end{matrix} \:
 \vrule \: x
\end{bmatrix}.
$$
Remarkably, the Hermite polynomials $H_n(x)$ are simply expressed in the terms of those functions (\cite{Dom})

$$
H_n(x)=x^n
{}_{2} F_0
\begin{bmatrix}
\begin{matrix}
 {\displaystyle -\frac{n}{2}}, {\displaystyle -\frac{n-1}{2}} \:\\
-
\end{matrix} \:
 \vrule \: \displaystyle -\frac{2}{x^2}
\end{bmatrix},\quad
G(x,t)=e^{ xt-\frac{1}{2}t^2 }.
$$ 

The natural way of generalisation of the Hermitte polinomials is to expand the array of ratios for another denominators, it was made  in \cite{Gould}, the authors  obtained the Gould-Hopper polynomials $g_n^m(x,h),$ with $\displaystyle
G(x,t)=e^{xt+ht^m},
$
which could be also expressed in the terms of the generalized hypergeometric function as follows
 
$$
g_n^m(x,h)=
x^n{}_{m} F_0
\begin{bmatrix}
\begin{matrix}
 {\displaystyle -\frac{n}{m}}, {\displaystyle -\frac{n-1}{m}}, {\ldots}, {\displaystyle-\frac{n-m+1}{m}}\:\\
-
\end{matrix} \:
 \vrule \: \displaystyle \frac{{(-1)}^m h m^m}{x^m}
\end{bmatrix}
$$

The aim of this paper is to find a polynomial family, which would be the Appell-type one and admit the generalized hypergeometric function representation simultaneously.
Still, there exist the polynomial families which have the needed representation, e.g., \textit{the generalized hypergeometric polynomials} $f_n\left(a_i;b_j;x \right),$ studied at \cite{Fase}, such that

$$
f_n\left(a_i; b_j; x \right)=
{}_{p+2} F_{q+2}
\begin{bmatrix}
\begin{matrix}
-n, n+1, {a_1}, {a_2}, {\ldots}, {a_p}\:\\
1, \displaystyle \frac{1}{2}, b_{1}, {b_2}, {\ldots}, {b_q}
\end{matrix} \:
 \vrule \: x^n
\end{bmatrix}, \quad  n \in {\mathbb{N}}_0,
$$
and \textit{the incomplete hypergeometric polynomials} associated  with generalized incomplete hypergeometric function, studied at \cite{RSriva},
 but they both  are not the Appel-type polynomials.

The difference between all mentioned classes of polynomials, depending, if they are of Appell-type  or not and if they possess  the generalized hypergeometric function representation or do not, has motivated the title of the paper.

Therefore, let us give the following

\begin{definition} \label{def1}
 Let $\Delta(k,-n)$ denote the array of $k$ ratios 
$$-\frac{n}{k}, -\frac{n-1}{k}, \ldots, -\frac{n-k+1}{k}, \quad n \in {\mathbb{N}}_0,k \in \mathbb{N}.$$
Then we call the polynomial family
 
\begin{equation}\label{three}
A_n^{(k)}(m,x)=
x^n{}_{k+p} F_q
\begin{bmatrix}
\begin{matrix}
{a_1}, {a_2}, {\ldots}, {a_p}, \Delta(k,-n)\:\\
{b_1}, {b_2}, {\ldots}, {b_q}
\end{matrix} \:
 \vrule \: \displaystyle \frac{m}{x^k}
\end{bmatrix}, \quad n, m \in {\mathbb{N}}_0, k \in {\mathbb{N}}
\end{equation}
 where 
 \begin{equation}\label{four}
 {}_{k+p} F_q=
\sum_{i=0}^{\left[ \slfrac{n}{k} \right]}
\frac{\prod_{r=1}^p {\left({a}_r\right)}^{(i)}}
{\prod_{s=1}^q {\left({b}_s\right)}^{(i)}}
{\prod_{j=1}^k {\left( -\frac{n-j+1}{k} \right)}^{(i)}}
\frac{m^i}{i!x^{ki}},
\end{equation}
the generalized hypergeometric Appell polynomials.
 \end{definition}

   We note that if $p=0, q=0,$  $k:=m,$  $m:=(-1)^k h{k^k}$
     the generalized hypergeometric Appell polynomials $A_n^{(k)}(m,x)$ become the Gould-Hopper polynomials $g_n^m(x,h)$  and if $p=0, q=0, m=-2, k=2$ 
 they become the Hermite polynomials $H_n(x)$ mentioned above.

The main result of this article is the following basic statement.

\begin{theorem}\label{t1}
The generalized hypergeometric Appell polynomials $A_n^{(k)}(m,x)$ defined by definition \ref{def1} are the Appell type ones.  
\end{theorem}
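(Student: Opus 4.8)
The plan is to work directly from the explicit expansion~(\ref{four}) and verify the defining Appell relation $\frac{d}{dx}A_n^{(k)}(m,x)=n\,A_{n-1}^{(k)}(m,x)$. Writing the polynomial over the standard basis, set
\[
A_n^{(k)}(m,x)=\sum_{i=0}^{\left[n/k\right]} C_i\,P_i(n)\,x^{\,n-ki},
\]
where $C_i=\dfrac{\prod_{r=1}^p (a_r)^{(i)}}{\prod_{s=1}^q (b_s)^{(i)}}\dfrac{m^i}{i!}$ does not depend on $n$, and $P_i(n)=\prod_{j=1}^k\left(-\frac{n-j+1}{k}\right)^{(i)}$ collects the factors coming from the array $\Delta(k,-n)$. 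Since only $P_i(n)$ and the monomial $x^{\,n-ki}$ carry the dependence on $n$, differentiation is term-by-term and produces $\frac{d}{dx}A_n^{(k)}(m,x)=\sum_i C_i\,P_i(n)\,(n-ki)\,x^{\,n-ki-1}$.

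First I would observe that the extremal term, present only when $k\mid n$, carries the factor $(n-ki)=0$ and hence drops out; consequently the nonzero terms of the derivative are exactly those with $0\le i\le\left[(n-1)/k\right]$, which is precisely the summation range occurring in $A_{n-1}^{(k)}(m,x)$. Matching the coefficient of $x^{\,n-1-ki}$ on both sides and cancelling the common factor $C_i$, the whole statement reduces to the single scalar identity $P_i(n)\,(n-ki)=n\,P_i(n-1)$, to be checked for every admissible $i$.

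The crux is this last identity, which I would establish by a telescoping comparison of the two products of Pochhammer symbols. The arguments defining $P_i(n)$ are $-\frac{n}{k},-\frac{n-1}{k},\dots,-\frac{n-k+1}{k}$, while those of $P_i(n-1)$ are $-\frac{n-1}{k},\dots,-\frac{n-k}{k}$; the two lists share $k-1$ entries, so all but one Pochhammer factor cancels in the ratio, leaving
\[
\frac{P_i(n)}{P_i(n-1)}=\frac{\left(-\frac{n}{k}\right)^{(i)}}{\left(-\frac{n-k}{k}\right)^{(i)}}=\frac{-\frac{n}{k}}{-\frac{n}{k}+i}=\frac{n}{\,n-ki\,},
\]
because the rising factorials $\left(-\frac{n}{k}\right)^{(i)}$ and $\left(-\frac{n-k}{k}\right)^{(i)}=\left(-\frac{n}{k}+1\right)^{(i)}$ differ only in their first and last factors. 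This is exactly $P_i(n)(n-ki)=n\,P_i(n-1)$, and substituting it back yields the Appell relation.

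I expect the only genuinely delicate point to be the bookkeeping of the summation limits---verifying that the vanishing of the $(n-ki)=0$ term makes the index set of the differentiated sum coincide with that of $A_{n-1}^{(k)}$---since the telescoping identity itself is routine once the shared factors are identified. An alternative route would replace the telescoping step by the Gauss multiplication theorem, which collapses $P_i(n)$ to $(-n)^{(ki)}/k^{ki}=(-1)^{ki}\,n!/\big(k^{ki}(n-ki)!\big)$ and makes the recurrence $P_i(n)(n-ki)=n\,P_i(n-1)$ transparent from the factorial form.
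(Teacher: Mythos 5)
Your proof is correct, but it follows a genuinely different route from the paper's. You verify the defining Appell relation $\frac{d}{dx}A_n^{(k)}(m,x)=n\,A_{n-1}^{(k)}(m,x)$ directly on the standard-basis expansion, reducing it---after the bookkeeping on summation limits, which you handle correctly---to the scalar recurrence $P_i(n)(n-ki)=n\,P_i(n-1)$, proved by telescoping the Pochhammer products. The paper instead \emph{constructs} the exponential generating function: it takes $A(t)$ to be the ${}_{p}F_{q}$ series evaluated at $(-1)^k m t^k/k^k$ (formula (\ref{genf})), expands $A(t)e^{xt}$ as a double series, and rearranges it by the interchange formula $\sum_n\sum_s a_{s,n}=\sum_n\sum_{i=0}^{[n/k]}a_{n-ki,i}$ to recover $\sum_n A_n^{(k)}(m,x)\,t^n/n!$. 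The two arguments pivot on the same computation---the collapse of $\prod_{j=1}^{k}\bigl(-\tfrac{n-j+1}{k}\bigr)^{(i)}$ into $(-1)^{ki}\,n!\big/\bigl(k^{ki}(n-ki)!\bigr)$---which the paper performs up front via the Gauss product formula (\ref{gauss}), and which in your write-up is the telescoping step (your closing alternative \emph{is} precisely (\ref{gauss})). What the paper's route buys is the explicit generating function itself: it is one of the announced results of the paper, and it is reused later (the connection-problem argument in Corollary~\ref{lm1} needs $A_1(t)$). What your route buys is economy of means: no series interchange and no formal power series manipulation, just coefficient matching against the original derivative definition of an Appell sequence. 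Two points you should make explicit to be complete: first, Appell's definition also requires $\deg A_n^{(k)}(m,x)=n$ and $A_0^{(k)}(m,x)\neq 0$, which is immediate here because the $i=0$ term of the expansion is exactly $x^n$; second, your telescoping ratio divides by $P_i(n-1)$, so you should note that it is nonzero in the admissible range---a factor $\bigl(-\tfrac{n-j}{k}\bigr)^{(i)}$ with $1\le j\le k$ can vanish only if $n-j=lk$ for some $0\le l\le i-1$, forcing $n\le ik$, which is impossible for $i\le\left[(n-1)/k\right]$---or simply run the whole argument through the factorial form of $P_i(n)$, where the recurrence becomes a division-free identity of integers.
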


\section{Basic  definitions and notation} 

In addition to the rising factorial we use the falling factorial
$
(x)_n=x(x-1)(x-2)\cdots (x-n+1)$ for $n>0$ and $(x)_0=1.$
In these notation, the following relations holds (see\cite{Abra})
\begin{equation}\label{fall}
(x)_n={(-1)}^n{(-x)}^{(n)},
\end{equation}
and the  Gauss product of indexes formula (see \cite{Karls}) will be written as follows
\begin{equation}\label{gauss}
{\left(-\lambda\right)}^{(mn)}=
m^{mn}
\prod_{j=1}^m {\left( -\frac{\lambda-j+1}{m} \right)}^{(n)}, n \in {\mathbb{N}}_0.
\end{equation}
We note that in the case when either $a$ or $b$ is a non-positive integer, the generalized hypergeometric function reduces to a polynomial:

$$
{}_{p} F_q
\begin{bmatrix}
\begin{matrix}
{-m}, {a_2}, {\ldots}, {a_p}\:\\
{b_1}, {b_2}, {\ldots}, {b_q}
\end{matrix} \:
 \vrule \:  z
\end{bmatrix}=
\sum_{n=0}^{\infty}
{\left(-1\right)}^n
{m \choose n}
\frac{\prod_{j=2}^p {{a}_j}^{(n)}
}{\prod_{s=1}^q {{b}_s}^{(n)}}{z^n}.
$$

As far as we deal with the differentiation, the differentiation formula with respect to $z$ would be useful:
$$
\frac{d}{dx}
{}_{p} F_q
\begin{bmatrix}
\begin{matrix}
{a_1}, {a_2}, {\ldots}, {a_p} \:\\
{b_1}, {b_2}, {\ldots}, {b_q}
\end{matrix} \:
 \vrule \: z
\end{bmatrix}=
\frac{\prod_{j=1}^p {a}_j}{\prod_{s=1}^q {b}_s}
{}_{p} F_q
\begin{bmatrix}
\begin{matrix}
{a_1+1}, {a_2+1}, {\ldots}, {a_p+1} \:\\
{b_1+1}, {b_2+1}, {\ldots}, {b_q+1}
\end{matrix} \:
 \vrule \: z
\end{bmatrix} \text{\cite{SLA}}.
$$

%========================================================
\section{Basic properties of the generalized hypergeometric Appell polynomials }
\subsection{Being of Appell type.}
%==============================================================
\textit{Proof of theorem~\ref{t1}.}
To prove the generalized hypergeometric Appell polynomials $A_n^{(k)}(m,x)$ are the Appell-type polynomials, it is sufficient to show that there  exists a formal power series $A(t)$ such that the following relation holds
\begin{gather*}
A(t)e^{xt}=
\sum_{n=0}^{\infty}
A_n^{(k)}(m,x)\frac{t^n}{n!}.
\end{gather*}

We set
${\left(\gamma \right)}^i=\slfrac{\left(\prod_{r=1}^p {\left({a}_r\right)}^{(i)}\right)}{\left(\prod_{s=1}^q {\left({b}_s\right)}^{(i)}\right)}.
$ 
Then from definition (\ref{defdef}) and relations (\ref{fall}) and (\ref{gauss}) it follows that
\begin{gather*}
A_n^{(k)}(m,x)=
x^n{}_{p+k} F_q
\begin{bmatrix}
\begin{matrix}
{a_1}, {a_2}, {\ldots}, {a_p}, \Delta(k,-n)\:\\
{b_1}, {b_2}, {\ldots}, {b_q}
\end{matrix} \:
 \vrule \: \displaystyle \frac{m}{x^k}
\end{bmatrix}
=
x^n
\sum_{i=0}^{\left[ \slfrac{n}{k} \right]}
\frac{{\left(\gamma \right)}^i {(-1)}^{ki} {\left( n \right)}_{ki}} {k^{ki}}\frac{m^i}{i!x^{ki}}.
\end{gather*}

We choose
\begin{equation}\label{genf}
A(t)=
{}_{p} F_q
\begin{bmatrix}
\begin{matrix}
a_1, a_2, \ldots, a_p \:\\
b_1, b_2, \ldots, b_q
\end{matrix} \:
 \vrule \:  \displaystyle {(-1)}^k m\frac{t^k}{k^k}
\end{bmatrix}.
\end{equation}

Using the expansion of $e^{xt}$ into the power series and changing the product of the series by the double series, we transform the generating function as follows
\begin{align*}
A(t)e^{xt}=
\left(\sum_{n=0}^{\infty}
{\left(\gamma \right)}^n
\frac{{\left( {(-1)}^k m\frac{ t^k}{k^{k}}\right)}^n}{n!}\right)
\left(\sum_{s=0}^{\infty}
\frac{{(xt)}^s}{s!}\right)=
\sum_{n=0}^{\infty}
\left(\sum_{s=0}^{\infty}
{\left(\gamma \right)}^n {(-1)}^{kn}
\frac{m^n x^s}{k^{kn}}
\frac{ t^{s+kn}}{s!n!}\right).
\end{align*}

Using the infinite sums interchange formula (\cite{Arf})
$$
\sum_{n=0}^{\infty}
\sum_{m=0}^{\infty}a_{n, m}=
\sum_{p=0}^{\infty}
\sum_{q=0}^{p}a_{p-q,q}
$$
and taking into account the multiplicity of $i,$ we have

$$
\sum_{n=0}^{\infty}
\sum_{s=0}^{\infty}a_{s, n}=
\sum_{n=0}^{\infty}
\sum_{i=0}^{\left[ \slfrac{n}{k} \right]}
a_{n-ki,i},
$$
then
\begin{align*}
\sum_{n=0}^{\infty}
\left(\sum_{s=0}^{\infty}
{\left(\gamma \right)}^n
\frac{m^n {(-1)}^{kn} x^s}{k^{kn}}
\frac{ t^{s+kn}}{s!n!}\right)=
\sum_{n=0}^{\infty}
\left(\sum_{i=0}^{\left[ \slfrac{n}{k} \right]}
\frac{m^i {(-1)}^{ki}{\left(\gamma \right)}^i }{k^{ki}}
\frac{x^{n-ki} t^{n}}{(n-ki)!i!}\right)\notag\\
=
\sum_{n=0}^{\infty}
x^n
\left(\sum_{i=0}^{\left[ \slfrac{n}{k} \right]}
\frac{n!}{(n-ki)!i!}
\frac{m^i{(-1)}^{ki}{\left(\gamma \right)}^i }{k^{ki}}
\frac{1}{x^{ki}}\right)
\frac{t^n}{n!}=
\sum_{n=0}^{\infty}
x^n
\left(\sum_{i=0}^{\left[ \slfrac{n}{k} \right]}
{\left(\gamma \right)}^i
\frac{m^i {(-1)}^{ki}(n)_{ki} }{k^{ki}}
\frac{\frac{1}{x^{ki}}}{i!}\right)
\frac{t^n}{n!}.
\end{align*}

The inner sum is precisely equal to the generalized hypergeometric function in the form of (\ref{three}) and, therefore, the relation (\ref{four}) holds.
This means that the generating function admit the needed representation (\ref{three}).
%\end{proof}

It should be noted that there is another way to prove theorem~\ref{t1}, which is to   replace
$xt$ by $t$  and $\slfrac{m}{x^k}$ by $x$
 in problem 26, p.173
\cite{Sriva}.

As a consequence of theorem\ref{t1}, we derive a new identity for the generalized hypergeometric function.
\begin{corollary}
  The following identity holds
\begin{gather*}n x^{n-1} {}_{p+k} F_q
\begin{bmatrix}
\begin{matrix}
{a_1}, {a_2}, {\ldots}, {a_p}, \Delta(k,-n+1)\:\\
{b_1}, {b_2}, {\ldots}, {b_q}
\end{matrix} \:
 \vrule \: \displaystyle \frac{m}{x^k}
\end{bmatrix}
=n  x^{n-1} {}_{p+k} F_q
\begin{bmatrix}
\begin{matrix}
{a_1}, {a_2}, {\ldots}, {a_p}, \Delta(k,-n)\:\\
{b_1}, {b_2}, {\ldots}, {b_q}
\end{matrix} \:
 \vrule \: \displaystyle \frac{m}{x^k}
\end{bmatrix}
\\-km {\gamma}_1 {\Delta}_1(k,-n)x^{n-k-1} {}_{p+k} F_q
\begin{bmatrix}
\begin{matrix}
{a_1+1}, {a_2+1}, {\ldots}, {a_p+1}, \Delta(k,-n+k)\:\\
{b_1+1}, {b_2+1}, {\ldots}, {b_q+1}
\end{matrix} \:
 \vrule \: \displaystyle \frac{m}{x^k}
\end{bmatrix},
\end{gather*}
where ${\Delta}_1(k,-n)$ denotes the product
$$
{{\left(-\frac{n}{k}\right)}} {{\cdot \left(-\frac{n-1}{k}\right)}} {{\ldots  \left( -\frac{n-k+1}{k}\right)}}.
$$
\end{corollary}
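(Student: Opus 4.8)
The plan is to recognise this identity as the Appell relation $\frac{d}{dx}A_n^{(k)}(m,x)=nA_{n-1}^{(k)}(m,x)$, guaranteed by Theorem~\ref{t1}, merely written out explicitly in hypergeometric form. First I would observe that the left-hand side is exactly $nA_{n-1}^{(k)}(m,x)$: since $\Delta(k,-n+1)=\Delta(k,-(n-1))$, the array appearing there is the one attached to index $n-1$, so by Definition~\ref{def1} that left-hand side equals $nA_{n-1}^{(k)}(m,x)$.

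Next I would compute $\frac{d}{dx}A_n^{(k)}(m,x)$ directly from the representation $A_n^{(k)}(m,x)=x^n\,{}_{p+k}F_q[\,a_1,\dots,a_p,\Delta(k,-n);b_1,\dots,b_q;\,m/x^k]$ using the product rule. Differentiating the prefactor $x^n$ yields $n\,x^{n-1}\,{}_{p+k}F_q[\dots]$ with argument and all parameters unchanged, which is the first summand on the right-hand side. For the remaining term I would differentiate the hypergeometric factor via the chain rule: the inner derivative is $\frac{d}{dx}(m/x^k)=-km\,x^{-k-1}$, while the derivative in the argument is supplied by the differentiation formula recalled in Section~2. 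That formula raises each parameter by one and contributes the constant $\frac{a_1\cdots a_p}{b_1\cdots b_q}\cdot\prod_{j=1}^{k}\bigl(-\tfrac{n-j+1}{k}\bigr)=\gamma_1\,\Delta_1(k,-n)$.

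The single point that needs checking is the form of the shifted upper parameters. Each entry $-\frac{n-j+1}{k}$ of $\Delta(k,-n)$ becomes $-\frac{n-j+1}{k}+1=-\frac{(n-k)-j+1}{k}$, so the raised array is precisely $\Delta(k,-(n-k))=\Delta(k,-n+k)$, exactly as written, while the parameters $a_r$ and $b_s$ become $a_r+1$ and $b_s+1$. Collecting the chain-rule factors, the second term equals $x^n\cdot\gamma_1\,\Delta_1(k,-n)\cdot(-km\,x^{-k-1})\,{}_{p+k}F_q[\,a_1+1,\dots,a_p+1,\Delta(k,-n+k);b_1+1,\dots,b_q+1;\,m/x^k]$, which is the second summand on the right.

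Finally, invoking Theorem~\ref{t1} to equate $\frac{d}{dx}A_n^{(k)}(m,x)$ with $nA_{n-1}^{(k)}(m,x)$ identifies the left-hand side of the claim with the sum of the two terms just obtained, establishing the identity. I do not expect a genuine obstacle: the content is entirely bookkeeping, and the only places demanding care are the parameter shift $\Delta(k,-n)+1=\Delta(k,-n+k)$ and the correct reading of the constant as $\gamma_1\,\Delta_1(k,-n)$.
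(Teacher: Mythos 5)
Your proposal is correct and follows essentially the same route as the paper: both identify the left-hand side as $nA_{n-1}^{(k)}(m,x)$ via the Appell property of Theorem~\ref{t1}, and obtain the right-hand side by applying the product (Leibniz) rule to $x^n\,{}_{p+k}F_q$ with the parameter shift $\Delta(k,-n)\mapsto\Delta(k,-n+k)$ and constant $\gamma_1\Delta_1(k,-n)$. The only difference is one of execution: where you invoke the differentiation formula recalled in Section~2 together with the chain rule, the paper expands the hypergeometric series term by term and resums it, which amounts to the same computation carried out by hand.
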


\begin{proof}
The generalized hypergeometric Appell polynomials are the Appell-type ones, hence,   the identity
 \begin{equation*}
\frac{d}{dx} \left\lbrace
A_n^{(k)}(m,x)
\right\rbrace
=n{A_{n-1}^{(k)}(m,x)}
\end{equation*}
fulfils.

Representing the polynomials ${A_{n-1}^{(k)}(m,x)}$ in the terms of the generalized hypergeometric function according to the definition~\ref{def1},  we  immediately obtain the left side of the corollary equality. 

To obtain its right side we differentiate the hypergeometric  representation of the polynomials  ${A_n^{(k)}(m,x)}$  under the Leibnitz rule:

\begin{gather*}
\frac{d}{dx}
\left\lbrace
 x^n {}_{p+k} F_q
\begin{bmatrix}
\begin{matrix}
{a_1}, {a_2}, {\ldots}, {a_p}, \Delta(k,-n)\:\\
{b_1}, {b_2}, {\ldots}, {b_q}
\end{matrix} \:
 \vrule \: \displaystyle \frac{m}{x^k}
\end{bmatrix} \right\rbrace\\
 =n x^{n-1}
 {}_{p+k} F_q
\begin{bmatrix}
\begin{matrix}
{a_1}, {a_2}, {\ldots}, {a_p}, \Delta(k,-n)\:\\
{b_1}, {b_2}, {\ldots}, {b_q}
\end{matrix} \:
 \vrule \: \displaystyle \frac{m}{x^k}
\end{bmatrix}\\
+
 x^n \frac{d}{dx}
 \left\lbrace {}_{p+k} F_q
\begin{bmatrix}
\begin{matrix}
{a_1}, {a_2}, {\ldots}, {a_p}, \Delta(k,-n)\:\\
{b_1}, {b_2}, {\ldots}, {b_q}
\end{matrix} \:
 \vrule \: \displaystyle \frac{m}{x^k}
\end{bmatrix} \right\rbrace.
\end{gather*}
Performing the derivative of the hypergeometric function, we obtain
\begin{gather*}
x^n\frac{d}{dx}
 \left\lbrace {}_{p+k} F_q
\begin{bmatrix}
\begin{matrix}
{a_1}, {a_2}, {\ldots}, {a_p}, \Delta(k,-n)\:\\
{b_1}, {b_2}, {\ldots}, {b_q}
\end{matrix} \:
 \vrule \: \displaystyle \frac{m}{x^k}
\end{bmatrix} \right\rbrace
 =
 x^n\left(
 \frac{{(-1)}^k(n)_k}{k^k}
  \frac{a_1\cdots a_p}{b_1\cdots b_q}
  \frac{m(-k)}{1!x^{k+1}}
 \right.\\
\left.+\frac{{(-1)}^{2k}(n)_{2k}}{k^{2k}} \frac{a_1(a_1+1)\cdots a_p(a_p+1)}{b_1(b_1+1)\cdots b_q(b_q+1)}\frac{m^2(-2k)}{2!x^{2k+1}}\right.\\
\left.+\frac{{(-1)}^{3k}(n)_{3k}}{k^{3k}} \frac{a_1(a_1+1)(a_1+2)\cdots a_p(a_p+1)(a_p+2)}{b_1(b_1+1)(b_1+2)\cdots b_q(b_q+1)(b_q+2)}\frac{m^3(-3k)}{3!x^{3k+1}}+\cdots
 \right)
\end{gather*}
\begin{gather*}
=x^{n-k-1}
mk \frac{{(-1)}^{k+1}(n)_k}{k^k}
  \frac{a_1\cdots a_p}{b_1\cdots b_q}
 \left(
 1+\frac{{(-1)}^{k}(n-k)_{k}}{k^{k}} \frac{(a_1+1)\cdots (a_p+1)}{(b_1+1)\cdots (b_q+1)}\frac{m\cdot 2}{2!x^{k}} \right.\\
\left. +\frac{{(-1)}^{2k}(n-k)_{2k}}{k^{2k}} \frac{(a_1+1)(a_1+2)\cdots (a_p+1)(a_p+2)}{(b_1+1)(b_1+2)\cdots (b_q+1)(b_q+2)}\frac{m^2 \cdot 3}{3!x^{2k}} +\cdots \right)\\=-km {\gamma}_1 {\Delta}_1(k,-n) x^{n-k-1}{}_{p+k} F_q
\begin{bmatrix}
\begin{matrix}
{a_1+1}, {a_2+1}, {\ldots}, {a_p+1}, \Delta(k,-n+k)\:\\
{b_1+1}, {b_2+1}, {\ldots}, {b_q+1}
\end{matrix} \:
 \vrule \: \displaystyle \frac{m}{x^k}
\end{bmatrix},
 \end{gather*}
that ends the proof.

\end{proof}
Since an arbitrary polynomial on one variable  $P_n(x) \in \mathbb{C}[x] $ always  permits the  formal series representation

\begin{equation*}
P_n(x)=\sum_{i=0}^n{\alpha}_ix^i,
\end{equation*}
then we are interested in finding those representation for the generalized hypergeometric Appell polynomials.

\begin{corollary}\label{lm1}
The generalized hypergeometric Appell polynomials $A_n^{(k)}(m,x)$ possess 

(i) the
standard basis $\{x^i\}_{i=0}^{n}$  representation  

\begin{equation}\label{sumcoef}
A_n^{(k)}(m,x)=
\sum_{i=0}^{\left[ \slfrac{n}{k} \right]}
\frac{n!{(-1)}^{ki}{\left(\gamma \right)}^i m^i}{i!k^{ki}(n-ki)!}{x}^{n-ki},
\end{equation} 

(ii) the differential operator  formal power series
representation 
\begin{equation}\label{scoef}
A_{n}^{(k)}(m,x)=\left(\sum_{i=0}^{\left[ \slfrac{n}{k} \right]} 
\frac{{(-1)}^{ki} {\left(\gamma \right)}^i m^i }{i!k^{ki}} D^{ki} \right)
 x^{n}.
\end{equation}
\end{corollary}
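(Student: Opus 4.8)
The plan is to read off both representations directly from the closed-form expansion of $A_n^{(k)}(m,x)$ that was already established in the course of proving Theorem~\ref{t1}, namely
$$
A_n^{(k)}(m,x)=x^n\sum_{i=0}^{\left[\slfrac{n}{k}\right]}\frac{{\left(\gamma\right)}^i{(-1)}^{ki}{(n)}_{ki}}{k^{ki}}\frac{m^i}{i!\,x^{ki}}.
$$
Both parts are then a matter of rewriting the falling factorial ${(n)}_{ki}$ and regrouping factors, so I expect no genuine obstacle here: the content is bookkeeping rather than a new idea. The only point that needs a word of care is the interpretation of the summation range in part~(ii), which I address at the end.

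For part~(i), I would first combine the powers of $x$ using $x^n/x^{ki}=x^{n-ki}$, and then replace the falling factorial by its factorial form ${(n)}_{ki}=\slfrac{n!}{(n-ki)!}$, which is immediate from the definition of $(x)_n$ recalled in Section~2. Substituting this and collecting the constant $\slfrac{n!}{\bigl(i!\,(n-ki)!\bigr)}$ yields exactly the claimed expansion~\eqref{sumcoef}. Since $ki\le n$ throughout the sum, every exponent $n-ki$ is a nonnegative integer, so this is a legitimate expansion over the standard basis $\{x^i\}_{i=0}^{n}$.

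For part~(ii), the key observation is that the ordinary derivative acts on monomials by $D^{ki}x^n={(n)}_{ki}\,x^{n-ki}$, which reproduces precisely the falling factorial and the monomial occurring in \eqref{sumcoef}. Hence each summand $\frac{n!{(-1)}^{ki}{\left(\gamma\right)}^i m^i}{i!\,k^{ki}(n-ki)!}x^{n-ki}$ of part~(i) equals $\frac{{(-1)}^{ki}{\left(\gamma\right)}^i m^i}{i!\,k^{ki}}D^{ki}x^n$, and factoring the common $x^n$ out of the finite sum gives the operator series \eqref{scoef}. Finally I would remark that extending the operator sum beyond $i=\left[\slfrac{n}{k}\right]$ changes nothing, since $D^{ki}x^n=0$ as soon as $ki>n$; this justifies viewing $A_n^{(k)}(m,x)$ as the result of applying the formal power series $\sum_{i\ge 0}\frac{{(-1)}^{ki}{\left(\gamma\right)}^i m^i}{i!\,k^{ki}}D^{ki}$ in the operator $D$ to $x^n$, which is the natural Appell-type differential-operator description of the family.
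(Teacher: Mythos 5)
Your proof is correct, and both steps go through exactly as you describe, but your route is genuinely different from the paper's. The paper does not revisit the falling-factorial expansion from the proof of Theorem~\ref{t1}. For part (i) it invokes the connection-problem method of \cite{Cheih}: for two Appell families with generating series $A_1(t)$ and $A_2(t)$, the connection coefficients are the Taylor coefficients of the ratio $A_2(t)/A_1(t)$, and the expansion follows from the general formula $Q_n(x)=\sum_{m}\frac{n!}{m!}\alpha_{n-m}P_m(x)$; specializing to the pair $\{x^n\}$ and $\{A_n^{(k)}(m,x)\}$, the coefficients $\alpha_{kr}=\frac{(-1)^{kr}(\gamma)^r m^r}{k^{kr}r!}$ are read off from the hypergeometric series (\ref{genf}) (incidentally, the paper's assignment of roles, $Q_n(x)=x^n$ with $A_2(t)=1$, appears inverted, since the series it then computes is $A(t)$ itself rather than $1/A(t)$; your direct route avoids this pitfall entirely). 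For part (ii) the paper again argues through general Appell theory: it puts the polynomial in the symmetric form $\sum_{i}\binom{n}{ki}c_i x^{n-ki}$ and cites the equivalence, from \cite{Haze}, of that form with the operator representation $\bigl(\sum_{i}\frac{c_i}{i!}D^{i}\bigr)x^{n}$. Your argument instead treats both parts as direct bookkeeping on the expansion $A_n^{(k)}(m,x)=x^n\sum_{i}(\gamma)^i(-1)^{ki}(n)_{ki}\,m^i/(i!\,k^{ki}x^{ki})$ already established inside the proof of Theorem~\ref{t1}, via $(n)_{ki}=\slfrac{n!}{(n-ki)!}$ for (i) and $D^{ki}x^n=(n)_{ki}x^{n-ki}$ for (ii). What you gain is a shorter, self-contained verification that needs no external lemma and checks the coefficients explicitly; what the paper's route buys is generality, since the connection-coefficient formula and the symmetric-form/operator dictionary apply verbatim to any pair of Appell families, so the same computation yields expansions of $A_n^{(k)}(m,x)$ over other Appell bases, not just $\{x^n\}$. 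Your closing remark that the operator sum may be extended past $i=\left[\slfrac{n}{k}\right]$ because $D^{ki}x^n=0$ for $ki>n$ is a point the paper leaves implicit, and it is exactly what justifies calling (\ref{scoef}) a formal power series representation.
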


\begin{proof}
(i) We use an approach from \cite{Cheih} which is based on the idea of the connection problem.  

Given the two polynomial families of Appell type $\{P_n(x)\}$ and $\{Q_n(x)\}$ with generating functions  $A_1(t)$ and  $A_2(t)$ respectively, 
the solution of its connection problem
could be written as follows:
\begin{equation*}
Q_n(x)=
\sum_{m=0}^{n}
\frac{n!}{m!}
{\alpha}_{n-m}
P_m(x),
\end{equation*}
where
$$\frac{A_2(t)}{A_1(t)}=
\sum_{k=0}^{\infty}
{\alpha}_{k}t^k.
$$

We are searching for the unknown coefficients ${\alpha}_{k}$ to decompose the polynomials 
$$Q_n(x)=x^n, A_2(t)=1$$
 upon the polynomials $A_n^{(k)}(m,x)$ defined by (\ref{three}) with generating function $A_1(t)$ defined by (\ref{genf}). 
Deriving the ratio of generating functions $A_2(t)$ and $ A_1(t)$  we have

\begin{align*}
\frac{A_2(t)}{A_1(t)}=
\sum_{r=0}^{\infty}
\frac{{(-1)}^{kr} m^r}{k^{kr}r!}
{\left(\gamma \right)}^r
 t^{kr}
 =
 \sum_{r=0}^{\infty}
 {\alpha}_{{rk}}
 t^{rk},
\end{align*}
 and, constructing the corresponding coefficients ${\alpha}_{n-m},$ we obtain the needed representation.
 
 (ii) 
 An arbitrary Appell-type polynomial $
P_n(x)$ could be also written in the symmetric form 
\begin{equation*}\label{symm}
P_n(x)=\sum_{i=0}^n {n \choose i}c_i x^{n-i}.
\end{equation*}

According to \cite{Haze}, the latter expression is equivalent to the following differential operator representation
$$
P_n(x)=\left( \sum_{i=0}^n 
\frac{c_i}{i!} D^i \right)
 x^{n},
$$
where $D: = \slfrac{d}{dx}$ is an ordinary differentiation with respect to x,
consequently, 
$$
A_n^{(k)}(m,x)=\sum_{i=0}^{\left[ \slfrac{n}{k} \right]} {n \choose ki}c_i x^{n-ki}=\sum_{i=0}^{\left[ \slfrac{n}{k} \right]} {n \choose ki}
\frac{{(-1)}^{ki} {\left(\gamma \right)}^i m^i (ki)!}{i!k^{ki}}
 x^{n-ki},
$$
we educe  a differential operator  formal power series
representation of the generalized hypergeometric Appell polynomials of the form of (\ref{scoef}).

\end{proof}

\textit{Remark}. Comparing  the power series (\ref{tree}) and operational formula(\ref{scoef}) of the generalized hypergeometric Appell polynomials to the corresponding  ones of the Gould-Hopper polynomials
$$A(t)=e^{ht^m}, \quad 
g_n^m (x,h)=\left( e^{hD^m} \right) x^n,
$$
it is easy to see that the latter  have  more compact forms.

\textbf{Symmetry.}
Substituting the negative value of argument into the formula (\ref{sumcoef})
\begin{equation*}
A_n^{(k)}(m,-x)=
\sum_{i=0}^{\left[ \slfrac{n}{k} \right]}
{(-1)}^{n-ki}
\frac{n!{(-1)}^{ki}{\left(\gamma \right)}^i m^i}{i!k^{ki}(n-ki)!}{x}^{n-ki},
\end{equation*}
we conclude that, in the case of even $k,$ the generalized hypergeometric Appell polynomials are the even ones themselves while $n$ is an even number, 
and they  are the odd ones themselves while $n$ is an odd number:
$$ A_{2n}^{(2k)}(m,-x)= A_{2n}^{(2k)}(m,x),
\quad  A_{2n+1}^{(2k)}(m,-x)= -A_{2n+1}^{(2k)}(m,x).$$

 Otherwise, for any odd $k$ in the case of odd $n,$ the summands standing on the even places change their signs into the opposite ones, and the same do the summands standing on the odd places in the case of even $n.$

\subsection{Addition and Multiplication Formulas and Other Properties}
Here we shall prove the following result.
\begin{theorem}
The following formulas hold for the  generalized hypergeometric Appell polynomials
 
$(i)$ \textit{addition formula}
$$
A_n^{(k)}(m,x+y)=
\sum_{i=0}^{n}
{n \choose i} y^{n-i}
A_i^{(k)}(m,x)=\sum_{i=0}^{n}
{n \choose i} x^{n-i}
A_i^{(k)}(m,y),
$$

$(ii)$ \textit{multiplication formula}
$$
A_n^{(k)}(m,Mx)=
\sum_{i=0}^{n}
{n \choose i}{(M-1)}^{n-i} x^{n-i}
A_i^{(k)}(m,x),
$$

$(iii)$ \textit{indexes interchange formula}
$$\sum_{i=0}^{n}
{n \choose i}
A_i^{({k_1})}(m,x)A_{n-i}^{({k_2})}(m,y)=\sum_{i=0}^{n}
{n \choose i}
A_i^{({k_2})}(m,x)A_{n-i}^{({k_1})}(m,y)
$$

$(iv)$ \textit{convolution type identity}
\begin{gather*}
\sum_{i=0}^{n} {(-1)}^i
{n \choose i}
A_i^{({k})}(m,x)A_{n-i}^{({k})}(m,x)\\
=\frac{{(-1)}^n m^{\slfrac{n}{k}}n!}{k^n}
\sum_{i=0}^{\left[ \slfrac{n}{k}\right]}
\frac{a_1^{(i)}  \ldots a_p^{(i)}}{i! b_1^{(i)}  \ldots b_q^{(i)}}
\frac{a_1^{\left(\slfrac{n}{k}-i \right)}  \ldots a_p^{\left(\slfrac{n}{k}-i \right)} }{\left(\slfrac{n}{k}-i \right)! b_1^{\left(\slfrac{n}{k}-i \right)}   \ldots b_q^{\left(\slfrac{n}{k}-i \right)} }.
\end{gather*}
\end{theorem}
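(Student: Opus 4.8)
The plan is to derive all four identities from the single tool furnished by Theorem~\ref{t1}, the exponential generating function
$$
A(t)e^{xt}=\sum_{n=0}^{\infty}A_n^{(k)}(m,x)\frac{t^n}{n!},
$$
with $A(t)$ as in (\ref{genf}). In each case I would form a suitable product of generating functions of this shape, expand it by the Cauchy product rule, and compare the coefficients of $t^n/n!$.

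For the addition formula $(i)$ I would begin from $e^{(x+y)t}=e^{xt}\,e^{yt}$, which gives
$$
\sum_{n=0}^{\infty}A_n^{(k)}(m,x+y)\frac{t^n}{n!}=A(t)e^{(x+y)t}=\left(\sum_{n=0}^{\infty}A_n^{(k)}(m,x)\frac{t^n}{n!}\right)\left(\sum_{s=0}^{\infty}y^s\frac{t^s}{s!}\right).
$$
Collecting the coefficient of $t^n/n!$ produces $\sum_{i=0}^n\binom{n}{i}y^{n-i}A_i^{(k)}(m,x)$; associating the factor $A(t)$ with $e^{yt}$ instead and leaving $e^{xt}$ as the plain exponential yields the symmetric second expression. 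The multiplication formula $(ii)$ then follows at once by setting $y=(M-1)x$ in $(i)$, since $x+(M-1)x=Mx$ and $((M-1)x)^{n-i}=(M-1)^{n-i}x^{n-i}$.

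For the interchange formula $(iii)$ I would write $A_{k_1}(t)$ and $A_{k_2}(t)$ for the series (\ref{genf}) carrying the parameters $k_1$ and $k_2$. The coefficient of $t^n/n!$ in $\left(A_{k_1}(t)e^{xt}\right)\left(A_{k_2}(t)e^{yt}\right)$ is precisely the left-hand sum, and the same coefficient in $\left(A_{k_2}(t)e^{xt}\right)\left(A_{k_1}(t)e^{yt}\right)$ is the right-hand sum. Both products equal $A_{k_1}(t)A_{k_2}(t)e^{(x+y)t}$, because multiplication of formal power series is commutative, so the two coefficients agree.

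The convolution identity $(iv)$ is the delicate step, and I would leave it for last. The generating function of the sign-weighted sequence $\{(-1)^iA_i^{(k)}(m,x)\}$ is $A(-t)e^{-xt}$, so the left-hand side is the coefficient of $t^n/n!$ in $\left(A(-t)e^{-xt}\right)\left(A(t)e^{xt}\right)$. The decisive simplification is that $e^{-xt}e^{xt}=1$: the whole $x$-dependence cancels, and the left-hand side reduces to $n!\,[t^n]\,A(-t)A(t)$, a product of two pure ${}_pF_q$ series. Using $(-t)^k=(-1)^kt^k$ to rewrite the argument of $A(-t)$ and Cauchy-multiplying the two series, the coefficient of $t^n$ survives only when $k\mid n$, which is exactly what renders the exponent $n/k$ of $m$ in the statement an integer; the resulting double sum over the two Pochhammer blocks then matches the right-hand side. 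The hard part will be the sign bookkeeping: one has to carry the factor $(-1)^k$ that sits in the argument of $A(t)$ through the convolution and check that it collapses to the single prefactor $(-1)^n/k^n$, which makes the parity of $k$ the point deserving the closest attention (and already explains why the left-hand sum must vanish for odd $n$).
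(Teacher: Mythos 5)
Your generating-function treatment of $(i)$--$(iii)$ is correct and complete: the factorization $e^{(x+y)t}=e^{xt}e^{yt}$, the specialization $y=(M-1)x$, and the commutativity argument for $(iii)$ all go through exactly as you describe. This is more self-contained than the paper, which disposes of $(i)$--$(ii)$ by citing the general theory of Appell sequences and of $(iii)$ by pointing to the connection-problem methods of Cheikh--Chaggara; your derivation of $(ii)$ from $(i)$ is a clean shortcut, and for $(iii)$ you supply an actual one-line argument where the paper gives none.

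For $(iv)$ your starting point is again the right one, and it even repairs a tacit omission in the paper: the paper proves $(iv)$ by ``simple direct calculations at $x=0$'', which only suffices because the left-hand side is independent of $x$ --- precisely the fact your identity $A(-t)e^{-xt}\cdot A(t)e^{xt}=A(-t)A(t)$ establishes. The gap is in the step you postponed: the signs do \emph{not} collapse to the single prefactor $(-1)^n$. Writing ${(\gamma)}^i$ for $\prod_{r} a_r^{(i)}\big/\prod_{s} b_s^{(i)}$ as in the paper, one has $A(t)=\sum_r {(\gamma)}^r(-1)^{kr}m^r t^{kr}/(r!\,k^{kr})$ and $A(-t)=\sum_s {(\gamma)}^s m^s t^{ks}/(s!\,k^{ks})$, so the Cauchy product gives, for $k\mid n$ and $N=n/k$,
$$
\sum_{i=0}^{n}(-1)^i\binom{n}{i}A_i^{(k)}(m,x)\,A_{n-i}^{(k)}(m,x)
=\frac{n!\,m^{N}}{k^{n}}\sum_{i=0}^{N}(-1)^{ki}\,\frac{{(\gamma)}^i\,{(\gamma)}^{N-i}}{i!\,(N-i)!},
$$
with the factor $(-1)^{ki}$ trapped inside the sum (and the value $0$ when $k\nmid n$). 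When $k$ is even this factor is identically $1$ and $(-1)^n=1$, so you recover the stated right-hand side; when $k$ is odd it does not factor out, and the theorem as printed is in fact false: for $p=q=0$, $k=1$ one has $A_n^{(1)}(m,x)=(x-m)^n$, so for $n=2$ the left side equals $(x-m)^2-2(x-m)^2+(x-m)^2=0$, while the stated right side equals $2m^2\bigl(\tfrac12+1+\tfrac12\bigr)=4m^2\neq 0$. So your method is sound and actually yields the corrected identity displayed above, but your claim that the double sum ``matches the right-hand side'' cannot be carried out as written: it holds only for even $k$, and the parity issue you flagged is not a bookkeeping detail to be checked but an error in the statement itself (the same $(-1)^{ki}$ survives if one follows the paper and evaluates directly at $x=0$).
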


\begin{proof}
The addition and the multiplication formulas hold for all Appell-type polynomial families (\cite{Haze}), consequently, they hold for the  generalized hypergeometric Appell polynomials as well. The indexes interchange formulas could be obtained applying methods  proposed in \cite{Cheih} and the convolution type identity is obtained by the simple direct calculations at $x=0$.
\end{proof}

It is worth stressing, that the polynomials $A_{n}^{(k)}(m,Mx)$ loose the property of being of Appell-type.
Moreover, the generalized hypergeometric  polynomials over the polynomials could be defined in the same manner as the generalized hypergeometric Appell polynomials:
\begin{equation*}
A_n^{(k)}(m,f(x))=
{\left(f(x)\right)}^n
{}_{p+k} F_q
\begin{bmatrix}
\begin{matrix}
{a_1}, {a_2}, {\ldots}, {a_p}, \Delta(k,-n)\:\\
{b_1}, {b_2}, {\ldots}, {b_q}
\end{matrix} \:
 \vrule \: \displaystyle  \frac{m}{{\left(f(x)\right)}^k}
\end{bmatrix},
\end{equation*}
where
$$f(x)=a_0x^p+a_1x^{p-1}+\cdots +a_p, \quad a_0\neq 0,$$
which submit the following differentiation rule

\begin{equation*}
\frac{d}{dx} A_n^{(k)}(m,f(x))=
n { f^{\prime}(x)} A_{n-1}^{(k)}(m,f(x)).
\end{equation*}

In particular, in the case when
$p=a_0=1,$ we obtain the Appell differentiation.

\end{document}